\newtheorem{theorem}{Theorem}[section]
\newtheorem{lemma}[theorem]{Lemma}
\newtheorem{proposition}[theorem]{Proposition}
\theoremstyle{definition}
\newtheorem{definition}[theorem]{Definition}
\theoremstyle{remark}
\numberwithin{equation}{section}
\begin{document}

\title{Expected First Return Times for Random Walks on Bounded Grids}

\author{Nan An}

\thanks{Date: \today}
\thanks{Email: ann5@mcmaster.ca}

\begin{abstract}
  We derive a general formula for computing the expected first return time of a random walk on a finite graph. Using this framework, we calculate the expected first return time in various settings over bounded rectangular grids with different boundary conditions.
\end{abstract}

\maketitle

\section{Problem Setup}

We follow standard definitions and notation for Markov chains as in \cite{levin}\cite{durrett}.

We study random walks on general finite graphs. Let \( G = (V, E) \) be an graph with vertex set \( V \) and edge set \( E \), where the walk proceeds by moving from a vertex to a uniformly randomly chosen neighbour at each time step. We assume \( G \) is connected and nontrivial. The number of vertices is \( |V| = N \).

We model the walk as a Markov chain with state space \( V \) and transition matrix \( U \in \mathbb{R}^{N \times N} \). Let \( s \in \mathbb{R}^N \) be a probability distribution over \( V \), so that its evolution is given by
\[
s^{k+1} = s^k U.
\]
We fix an arbitrary indexing $f$ of \( V \), so that distributions and matrices can be treated as vectors and matrices in \( \mathbb{R}^N \).

\begin{definition}
Let \( o \in V \) be a fixed starting vertex. For \( k \in \mathbb{N}^+ \), let \( p_k \) be the probability that the walker returns to \( o \) for the first time at step \( k \). The \textit{expected first return time} to \( o \), denoted \( E \), is
\[
E = \sum_{k=1}^\infty k p_k.
\]
\end{definition}

\section{The Waiting Room Construction}

Directly working with return times in \( G \) can be difficult due to cycles and revisits. To isolate first returns to a vertex \( o \), we define a modified Markov chain with a new structure.

\begin{definition}[Waiting Room Construction]
Fix a distinguished vertex \( o \in V \). Define the set of neighbours of \( o \) as
\[
A = \{ x \in V : (x, o) \in E \}.
\]

We construct a modified Markov chain with state space
\[
V^* = (V \setminus \{o\}) \cup \{l, b\},
\]
where:
\begin{itemize}
  \item \( l \) is a \emph{waiting room} state that captures transitions into \( o \),
  \item \( b \) is an \emph{absorbing} state reached from \( l \).
\end{itemize}

The transition matrix \( M \in \mathbb{R}^{(N+1) \times (N+1)} \), where \( N = |V| \), is defined entrywise by:
\[
M_{x, y} =
\begin{cases}
U_{x, y}, & x, y \in V \setminus \{o\}, \\
U_{x, o}, & x \in A,\ y = l, \\
1, & x = l,\ y = b, \\
1, & x = b,\ y = b, \\
0, & \text{otherwise}.
\end{cases}
\]

We define a flattening map \( f^* : V^* \to \{0, \dotsc, N\} \) for indexing the matrix \( M \) as follows:
\[
f^*(x) =
\begin{cases}
f(x), & x \in V \setminus \{o\},\ f(x) < f(o), \\
f(x) - 1, & x \in V \setminus \{o\},\ f(x) > f(o), \\
N - 1, & x = l, \\
N, & x = b.
\end{cases}
\]

With this indexing, the matrix \( M \) takes block form:
\[
M = \begin{bmatrix}
Q & R \\
0 & T
\end{bmatrix},
\]
where
\begin{itemize}
  \item \( Q \in \mathbb{R}^{(N-1) \times (N-1)} \) governs transitions on \( V \setminus \{o\} \),
  \item \( R \in \mathbb{R}^{(N-1) \times 2} \) captures transitions from \( A \) to the waiting room,
  \item \( T \in \mathbb{R}^{2 \times 2} \) handles transitions from \( l \) to \( b \), and absorption at \( b \).
\end{itemize}
\end{definition}

\begin{proposition}
  The matrices \(Q\), \(R\), and \(T\) arising from the "waiting room" construction satisfy:
  \begin{enumerate}
    \item \(Q \in \mathbb{R}^{(n-1)\times(n-1)}\) is a sub-stochastic matrix: each row sum is at most 1. In particular, for \(x \in A\), the row sum of \(Q\) is strictly less than 1. Its spectral radius \(\rho(Q)\) is strictly less than 1.
    \item The only nonzero entries of \(R \in \mathbb{R}^{(n-1)\times 2}\) are in the first column, and are given by \(R_{x,0} = U_{x,o}\). The second column of \(R\) is zero: no state transitions directly to the absorbing state \(b\).
    \item The matrix \(T \in \mathbb{R}^{2\times 2}\) is
    \[
    T = \begin{bmatrix}
    0 & 1 \\
    0 & 1
    \end{bmatrix}, \quad \text{and } T^2 = T.
    \]
  \end{enumerate}
  \end{proposition}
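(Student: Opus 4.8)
The plan is to verify all three items by direct computation from the case definition of $M$ and its block decomposition; the only part requiring a genuine argument is the bound $\rho(Q) < 1$ in item~(1).

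For items~(2) and~(3) I would just read entries off the definition of $M$. Under the flattening $f^*$ the two largest indices $N-1$ and $N$ correspond to $l$ and $b$, so in the block form $M = \left[\begin{smallmatrix} Q & R \\ 0 & T \end{smallmatrix}\right]$ the submatrix $T$ is the bottom-right $2\times 2$ corner (rows and columns indexed by $l,b$) and $R$ is the $(N-1)\times 2$ block above it (columns indexed by $l,b$). Scanning the defining cases: among them the only one with $x=l$ is $M_{l,b}=1$ and the only one with $x=b$ is $M_{b,b}=1$, while every other $l$- or $b$-row entry falls under ``otherwise'' and vanishes; hence $T = \left[\begin{smallmatrix} 0 & 1 \\ 0 & 1 \end{smallmatrix}\right]$, and $T^2 = T$ is a one-line multiplication. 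Similarly, the only defining case with $x \in V\setminus\{o\}$ and target in $\{l,b\}$ is ``$x\in A,\ y=l$'', giving $M_{x,l}=U_{x,o}$; since $U_{x,o}=0$ whenever $x\notin A$ (then $o$ is not a neighbour of $x$), the formula $R_{x,0}=U_{x,o}$ holds for every row, and the $b$-column of $R$ is identically zero.

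For the sub-stochasticity in item~(1): since $U$ is stochastic, for each $x\in V\setminus\{o\}$ the row sum of $Q$ is $\sum_{y\in V\setminus\{o\}}U_{x,y} = 1 - U_{x,o} \le 1$. If $x\in A$ then $o$ is a neighbour of $x$, so $U_{x,o}=1/\deg(x)>0$ and the row sum is strictly less than $1$.

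The substantive step is $\rho(Q)<1$. Write $\mathbf 1$ for the all-ones vector. Sub-stochasticity says $Q\mathbf 1\le\mathbf 1$ entrywise, and since $Q\ge 0$ entrywise this gives $Q^{k+1}\mathbf 1 = Q^{k}(Q\mathbf 1)\le Q^{k}\mathbf 1$, so the vector of row sums of $Q^{k}$ is entrywise non-increasing in $k$. Moreover $(Q^{k}\mathbf 1)_x$ is the $M$-probability that the walk started at $x$ has not yet entered $l$ within $k$ steps (once the walk leaves $V\setminus\{o\}$ it never returns). Since $G$ is connected with $N$ vertices, there is a simple path $x=v_0,v_1,\dots,v_m=o$ with $m\le N-1$ and $v_0,\dots,v_{m-1}\in V\setminus\{o\}$; in $M$ this trajectory, with its final step $v_{m-1}\to o$ replaced by $v_{m-1}\to l$ (legitimate because $v_{m-1}\in A$), has positive probability and reaches $l$ at step $m$, so $(Q^{N-1}\mathbf 1)_x \le (Q^{m}\mathbf 1)_x < 1$. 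As $V$ is finite this yields $\|Q^{N-1}\|_\infty < 1$ in the maximum-row-sum norm, whence $\rho(Q)^{N-1} = \rho(Q^{N-1}) \le \|Q^{N-1}\|_\infty < 1$. I expect the only real obstacle to be making this last step fully rigorous — in particular the monotonicity of the row-sum vectors of the powers of $Q$ and the uniform exponent $N-1$ supplied by connectivity; everything else is bookkeeping against the case definition of $M$.
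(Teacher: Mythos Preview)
Your argument is correct, and for the spectral-radius claim it follows a genuinely different path from the paper. The paper asserts that $Q$ is irreducible because $G$ is connected and then appeals to a Perron--Frobenius lemma (a strictly positive Perron vector forces $\rho<1$ for a sub-stochastic irreducible matrix). You instead bound a power of $Q$ directly in the row-sum norm: connectivity of $G$ gives, for every $x\in V\setminus\{o\}$, a simple path to $o$ of length at most $N-1$ that stays inside $V\setminus\{o\}$ until its last step, hence $\|Q^{N-1}\|_\infty<1$ and $\rho(Q)<1$ follows from $\rho(A)\le\|A\|$. This is more elementary (no Perron--Frobenius) and also more robust: the paper's irreducibility claim for $Q$ actually fails whenever $o$ is a cut vertex of $G$, since then $G\setminus\{o\}$ is disconnected and $Q$ is reducible. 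Your argument needs only connectivity of $G$ itself, so it sidesteps that issue entirely. The verifications of items~(2) and~(3) and of sub-stochasticity match what the paper calls ``verified directly.'' Your closing worry about rigor is unfounded: the monotonicity $Q^{k+1}\mathbf 1\le Q^k\mathbf 1$ and the uniform exponent $N-1$ are already fully justified in your own paragraph.
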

  
  \begin{proof}
  Most of the proposition can be verified directly. As for the spectral radius, we know \(Q\) is irreducible because the graph \(G\) is connected. The claim then follows from the following lemma.
  \end{proof}
  
  \begin{lemma}
  The spectral radius of a sub-stochastic irreducible matrix is strictly less than 1.
  \end{lemma}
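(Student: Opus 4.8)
The plan is to invoke the Perron--Frobenius theorem for irreducible nonnegative matrices and then extract a strict inequality from the fact that at least one row is strictly deficient. Throughout, write \( \rho = \rho(Q) \) and note that here ``sub-stochastic'' is understood to include that \emph{some} row sum is strictly below \(1\) (as in the application: any neighbour \(x \in A\) of \(o\) gives a deficient row of \(Q\)); without that hypothesis the statement fails, since a permutation matrix is irreducible and sub-stochastic with \(\rho = 1\). Since \(Q \ge 0\) entrywise and \(Q\) is irreducible, Perron--Frobenius guarantees that \(\rho\) is an eigenvalue of \(Q\) and that there is a strictly positive left eigenvector \(\pi > 0\) with \(\pi Q = \rho\,\pi\).

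First I would record the elementary bound \(\rho \le 1\): for any induced matrix norm \(\rho(Q) \le \|Q\|\), and taking the \(\infty\)-norm gives \(\rho(Q) \le \|Q\|_\infty = \max_j \sum_i Q_{j,i} \le 1\) because every row sum is at most \(1\) (this is also immediate from Gershgorin's theorem). It therefore remains only to rule out \(\rho = 1\).

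Next I would sum the eigenvector identity over all coordinates. Writing \(r_j = \sum_i Q_{j,i} \in [0,1]\) for the \(j\)-th row sum,
\[
\rho \sum_j \pi_j \;=\; \sum_i (\pi Q)_i \;=\; \sum_i \sum_j \pi_j Q_{j,i} \;=\; \sum_j \pi_j \sum_i Q_{j,i} \;=\; \sum_j \pi_j r_j .
\]
By hypothesis \(r_{j_0} < 1\) for some index \(j_0\) while \(r_j \le 1\) for all \(j\); since every \(\pi_j > 0\), this forces \(\sum_j \pi_j r_j < \sum_j \pi_j\). Dividing by the positive quantity \(\sum_j \pi_j\) yields \(\rho < 1\), as claimed.

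The step I expect to require the most care is the appeal to Perron--Frobenius for the \emph{strict} positivity of \(\pi\); without it the summation argument would give only \(\rho \le 1\). If one prefers to avoid the full theorem, an alternative is to assume \(\rho = 1\) and produce a nonnegative right eigenvector \(v \ge 0\), \(v \ne 0\), with \(Qv = v\) (e.g.\ via a fixed-point argument on the simplex): choosing \(i\) with \(v_i = \max_k v_k\) and using \(v_i = \sum_j Q_{i,j} v_j \le r_i \max_k v_k \le v_i\) forces all inequalities to be equalities, so \(r_i = 1\) and \(v_j = v_i\) whenever \(Q_{i,j} > 0\); irreducibility then propagates equality along directed paths, making \(v\) constant and every row sum equal to \(1\), contradicting the deficient row \(j_0\).
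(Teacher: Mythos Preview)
Your argument is correct and is essentially the paper's own proof: both invoke Perron--Frobenius for a strictly positive left eigenvector, sum the eigenvalue identity against the all-ones vector, and extract the strict inequality from the combination of a strictly deficient row with strict positivity of the eigenvector. The paper packages the same computation via auxiliary correction terms \(\epsilon_j = \tfrac{1}{N}(1 - r_j)\) and the stochastic completion \(\widehat{A} = A + \epsilon^T\mathbf{1}\), but underneath it is exactly your identity \(\rho \sum_j \pi_j = \sum_j \pi_j r_j < \sum_j \pi_j\); your presentation is in fact the more direct of the two, and your remark that the lemma tacitly requires at least one strictly deficient row is a worthwhile clarification.
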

  
  \begin{proof}
  Suppose \( A \in \mathbb{R}^{N \times N} \) is such a matrix, meaning it is non-negative and the sum of each row is at most 1. Let \( \lambda = \rho(A) \) be the Perron-Frobenius eigenvalue of \( A \), with corresponding eigenvector \( \nu \in \mathbb{R}^N \), normalized such that \( \|\nu\|_1 = 1 \). By the Perron-Frobenius theorem for irreducible non-negative matrices, all entries of \( \nu \) are strictly positive.
  
  Using this, we compute:
  \[
  |\lambda| = \|\lambda \nu\|_1 = \|\nu A\|_1 = \sum_j \sum_k \nu_j A_{jk}.
  \]
  
  Define
  \[
  \epsilon_j = \frac{1}{N} \left( 1 - \sum_{k=1}^N A_{jk} \right),
  \]
  so that adding \( \epsilon_j \) to each element of the \( j \)-th row of \( A \) yields a row sum of 1. Let \( \epsilon \in \mathbb{R}^N \) be the row vector of the \( \epsilon_j \)'s. Then:
  \[
  |\lambda| = \sum_j \sum_k \nu_j \left(A_{jk} + \epsilon_j - \epsilon_j \right)
  = \sum_j \sum_k \nu_j (A_{jk} + \epsilon_j) - \sum_j \sum_k \nu_j \epsilon_j.
  \]
  
  Note that:
  \[
  \sum_j \sum_k \nu_j \epsilon_j = \sum_j \nu_j \cdot N \epsilon_j = N (\epsilon \cdot \nu),
  \]
  so:
  \[
  |\lambda| = \left\| \nu (A + \epsilon^T \mathbf{1}) \right\|_1 - N (\epsilon \cdot \nu).
  \]
  
  Define the matrix \( \widehat{A} = A + \epsilon^T \mathbf{1} \in \mathbb{R}^{N \times N} \), where \( \mathbf{1} \in \mathbb{R}^N \) is the column vector of ones. Then \( \widehat{A} \) is a proper stochastic matrix (each row sums to 1). Since \( \nu > 0 \) and \( \epsilon \geq 0 \) with at least one \( \epsilon_j > 0 \), we have \( \epsilon \cdot \nu > 0 \). Therefore:
  \[
  |\lambda| = \left\| \nu \widehat{A} \right\|_1 - N (\epsilon \cdot \nu) = 1 - N (\epsilon \cdot \nu) < 1.
  \]
  
  Thus, the spectral radius \( \lambda = \rho(A) \) is strictly less than 1.
\end{proof}

\begin{proposition}
  For all \( k \geq 1 \), the powers of the matrix \( M \) satisfy:
  \[
  M^k =
  \begin{bmatrix}
  Q^k & \sum_{j=0}^{k-1} Q^j R T^{k-1-j} \\
  0 & T
  \end{bmatrix}.
  \]
  \end{proposition}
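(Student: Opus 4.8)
The plan is to prove the identity by induction on $k$, exploiting the block-triangular form $M = \left[\begin{smallmatrix} Q & R \\ 0 & T\end{smallmatrix}\right]$ together with the relation $T^2 = T$ established in the preceding proposition, which in particular yields $T^m = T$ for every integer $m \geq 1$. Block-triangular matrices multiply block-triangularly, so the structure is preserved at every power; the content of the statement is the explicit formula for the upper-right block, which the induction tracks.

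For the base case $k = 1$, the claimed right-hand side has top-left block $Q^{1} = Q$, bottom-left block $0$, bottom-right block $T$, and top-right block $\sum_{j=0}^{0} Q^{j} R\, T^{-j} = Q^{0} R\, T^{0} = R$ under the standard convention $Q^{0} = T^{0} = I$. This is exactly $M$, so the formula holds for $k=1$. For the inductive step, assume the formula holds for some $k \geq 1$ and write $S_k$ for its top-right block, $S_k = \sum_{j=0}^{k-1} Q^{j} R\, T^{k-1-j}$. Computing $M^{k+1} = M^{k} M$ by block multiplication gives
\[
M^{k+1} = \begin{bmatrix} Q^{k} & S_k \\ 0 & T \end{bmatrix}\begin{bmatrix} Q & R \\ 0 & T \end{bmatrix} = \begin{bmatrix} Q^{k+1} & Q^{k} R + S_k T \\ 0 & T^{2} \end{bmatrix}.
\]
The bottom-right block equals $T$ by the proposition. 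For the top-right block, distribute $T$ through the sum defining $S_k$ and use $T^{k-1-j} T = T^{k-j}$ (legitimate since $k-1-j \geq 0$ whenever $0 \leq j \leq k-1$) to obtain $Q^{k} R + S_k T = Q^{k} R + \sum_{j=0}^{k-1} Q^{j} R\, T^{k-j} = \sum_{j=0}^{k} Q^{j} R\, T^{k-j}$, where the term $Q^{k} R = Q^{k} R\, T^{0}$ is absorbed as the $j = k$ summand. This is precisely the top-right block predicted by the formula for $k+1$, closing the induction.

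The argument is essentially routine; there is no real obstacle, only a little bookkeeping. The points deserving attention are the convention $Q^{0} = T^{0} = I$ for the endpoint terms of the sums, the reindexing of the exponent when $T$ is pulled into the summand, and the invocation of $T^{2} = T$ so that the bottom-right block stabilizes at $T$ rather than appearing as $T^{k}$. One could equally expand $M^{k+1} = M\, M^{k}$; the computation is symmetric and no more difficult.
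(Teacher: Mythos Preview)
Your proof is correct and follows exactly the same route as the paper's sketch: induction on $k$, with the inductive step carried out by block-multiplying $M^{k}M$ and reindexing the resulting sum in the top-right block while using $T^{2}=T$ for the bottom-right block. Your write-up is simply a more detailed version of the same argument.
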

  
  \begin{proof}[Sketch of Proof]
  This can be shown by induction on \( k \). The base case \( k = 1 \) is immediate from the block structure of \( M \). Suppose the formula holds for \( k \), then for \( k+1 \):
  \begin{align*}
    M^{k+1} = M^k M &=
  \begin{bmatrix}
  Q^k & \sum_{j=0}^{k-1} Q^j R T^{k-1-j} \\
  0 & T
  \end{bmatrix}
  \begin{bmatrix}
  Q & R \\
  0 & T
  \end{bmatrix}\\
  &=
  \begin{bmatrix}
  Q^{k+1} & \sum_{j=0}^{k-1} Q^{j} R T^{k - j} + Q^k R T^0 \\
  0 & T^2
  \end{bmatrix},
  \end{align*}
  which simplifies to the claimed formula by reindexing the sum.
  \end{proof}
  
  \begin{proposition}
    Let \( Q \in \mathbb{R}^{n \times n} \) be a matrix such that \( \rho(Q) < 1 \). Then the following matrix series converge entry-wise:
    \[
    S_1 = \sum_{k=0}^\infty Q^k, \quad S_2 = \sum_{k=1}^\infty k Q^{k-1}.
    \]
    Moreover,
    \[
    S_1 = (I - Q)^{-1}, \quad S_2 = (I - Q)^{-2}.
    \]
    \end{proposition}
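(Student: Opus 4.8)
The plan is to first quantify how fast the powers \( Q^k \) decay, and then extract the two limits from telescoping identities. By Gelfand's formula, \( \lim_{k\to\infty}\|Q^k\|^{1/k} = \rho(Q) \) for any submultiplicative matrix norm. Fixing \( r \) with \( \rho(Q) < r < 1 \), this yields a constant \( C \) with \( \|Q^k\| \le C r^k \) for all \( k \ge 0 \). Since all norms on \( \mathbb{R}^{n\times n} \) are equivalent, each entry satisfies \( |(Q^k)_{ij}| \le C' r^k \) for a possibly larger constant \( C' \). Hence \( \sum_k |(Q^k)_{ij}| \) is dominated by \( \sum_k C' r^k \) and \( \sum_k k\,|(Q^{k-1})_{ij}| \) is dominated by \( \sum_k C' k r^{k-1} \); both are convergent, so \( S_1 \) and \( S_2 \) converge absolutely, and in particular entry-wise.

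Next I would identify the limits. Since \( \rho(Q) < 1 \), the scalar \( 1 \) is not an eigenvalue of \( Q \), so \( I - Q \) is invertible. For \( S_1 \) I would use the finite telescoping identity \( (I - Q)\sum_{k=0}^{m} Q^k = I - Q^{m+1} \); letting \( m \to \infty \) and using \( \|Q^{m+1}\| \le C r^{m+1} \to 0 \) gives \( (I - Q) S_1 = I \), i.e. \( S_1 = (I - Q)^{-1} \).

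For \( S_2 \) I would take the Cauchy-product route, since the estimates above already do the work: because \( \sum_k \|Q^k\| < \infty \), the two absolutely convergent series \( S_1 = \sum_{j\ge 0} Q^j \) and \( S_1 = \sum_{i\ge 0} Q^i \) may be multiplied and rearranged, and collecting the terms with \( i + j = k \) gives \( S_1^2 = \sum_{k\ge 0}(k+1)Q^k = \sum_{k\ge 1} k Q^{k-1} = S_2 \), whence \( S_2 = (I - Q)^{-2} \). Alternatively, one can telescope directly: \( (I - Q)^2 \sum_{k=1}^{m} k Q^{k-1} = I - Q^m - m(Q^m - Q^{m+1}) \), and since \( m\|Q^m\| \le C m r^m \to 0 \), letting \( m \to \infty \) again yields \( (I - Q)^2 S_2 = I \).

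The main obstacle is the convergence bookkeeping: turning the spectral hypothesis \( \rho(Q) < 1 \) into a genuine geometric bound \( \|Q^k\| \le C r^k \) (via Gelfand's formula, or equivalently by passing to the Jordan form, where the nilpotent blocks produce the polynomial-in-\( k \) factors that force the choice \( r > \rho(Q) \) rather than \( r = \rho(Q) \)), and then noting \( m\|Q^m\| \to 0 \) so the linear weight in \( S_2 \) is harmless. Once that estimate is secured, the invertibility of \( I - Q \), the entry-wise convergence, and the telescoping or Cauchy-product manipulations are all routine.
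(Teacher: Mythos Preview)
Your proposal is correct and follows essentially the same route as the paper: exponential decay of $\|Q^k\|$ from $\rho(Q)<1$ gives entry-wise convergence, a telescoping identity yields $S_1=(I-Q)^{-1}$, and the Cauchy product $S_1^2=\sum_{k\ge 0}(k+1)Q^k$ gives $S_2=(I-Q)^{-2}$. Your write-up is more careful---you invoke Gelfand's formula explicitly, pass through finite partial sums before letting $m\to\infty$, and offer an alternative direct telescoping for $S_2$---but the underlying argument is the same as the paper's.
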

    
    \begin{proof}
    Convergence follows from the fact that, since \( \rho(Q) < 1\), \(\|Q^k\|_2 \to 0 \) exponentially fast, hence \( \sum_{k=0}^\infty \|Q^k\|_2 < \infty \) and \( \sum_{k=1}^\infty k \|Q^{k-1}\|_2 < \infty \). This implies absolute convergence in norm, and thus entry-wise convergence of the matrix series \( S_1 \) and \( S_2 \).
    
    To compute \( S_1 \), observe:
    \[
    S_1 - Q S_1 = \sum_{k=0}^\infty Q^k - \sum_{k=0}^\infty Q^{k+1} = I,
    \]
    so \( S_1 = (I - Q)^{-1} \).
    
    For \( S_2 \), we consider the square of the geometric series:
    \[
    S_1^2 = \left( \sum_{i=0}^\infty Q^i \right) \left( \sum_{j=0}^\infty Q^j \right) = \sum_{k=0}^\infty \left( \sum_{i=0}^k Q^i Q^{k-i} \right) = \sum_{k=0}^\infty (k+1) Q^k = S_2
    \]
    This follows by grouping terms where \( i + j = k \).
    \end{proof}

    \begin{theorem}
      Let \( U \in \mathbb{R}^{n \times n} \) be the transition matrix of a Markov chain corresponding to a random walk on a graph, with a distinguished origin state \( o \). Let \( \hat{s} \in \mathbb{R}^n \) be the row of \( U \) corresponding to \( o \), and let \( \hat{s'} \in \mathbb{R}^n \) be the column of \( U \) corresponding to \( o \). Let \( s \), \( s' \) denote their extensions under the waiting room construction (i.e., where the entry corresponding to \( o \) is removed).
      
      Let \( Q \) be the submatrix of the modified transition matrix \( M \) corresponding to the transient states (i.e., all states except the origin), so that \( \rho(Q) < 1 \), and let
      \(
      N = (I - Q)^{-1}.
      \)
      
      Then, the expected first return time to $o$ is given by:
      
      $$
      E = U_{o,o} + s N^2 (s')^T = U_{o,o} + (sN)(s'N^T)^T,
      $$
    \end{theorem}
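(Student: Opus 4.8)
The plan is to read off the first-return probabilities $p_k$ directly from the waiting-room chain and then to evaluate $E=\sum_{k\ge 1}kp_k$ using the block-power formula for $M^k$ together with the matrix-series identities $S_1=(I-Q)^{-1}$ and $S_2=(I-Q)^{-2}$ established above.

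First I would set up the dictionary between first returns in $G$ and the waiting-room chain. Launch the original walk from $o$; its first step lands at $X_1$, which equals $o$ with probability $U_{o,o}$ (contributing $U_{o,o}$ to $p_1$) and otherwise lands at some $x\in V\setminus\{o\}$ with probability $U_{o,x}=s_x$. From a transient start $x$ at time $1$, a first return to $o$ at time $k\ge 2$ is exactly the event ``the waiting-room chain occupies state $l$ at time $k$'': the walk must remain in $V\setminus\{o\}$ at times $1,\dots,k-1$ and then step to $o$, which in $M$ is a step to $l$. Hence for $k\ge 2$,
\[
p_k=\sum_{x\ne o}s_x\,\bigl(e_x\,M^{k-1}\bigr)_{l},
\]
where $e_x$ is the indicator row vector at $x$. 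Feeding in the block-power formula for $M^{k-1}$, the $l$-coordinate is picked out by the $l$-column (first column) of $\sum_{j=0}^{k-2}Q^{j}RT^{k-2-j}$. The key bookkeeping observation is that $T^{0}=I$ while $T^{m}=T$ for $m\ge 1$, and (from the proposition on $Q,R,T$) that $R$ is supported only in its $l$-column, with entries $R_{x}=U_{x,o}=s'_{x}$; therefore $RT^{m}$ meets the $l$-column only when $m=0$, so only the $j=k-2$ summand contributes there, giving $p_k=sQ^{k-2}(s')^{T}$ for $k\ge 2$.

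The remaining work is summation. Substituting, reindexing by $j=k-2$, and writing $j+2=(j+1)+1$,
\[
E=U_{o,o}+\sum_{k\ge 2}k\,sQ^{k-2}(s')^{T}=U_{o,o}+s\Bigl(\sum_{j\ge 0}(j+1)Q^{j}+\sum_{j\ge 0}Q^{j}\Bigr)(s')^{T}.
\]
Both series converge since $\rho(Q)<1$, and by the series proposition they equal $(I-Q)^{-2}=N^{2}$ and $(I-Q)^{-1}=N$, so $E=U_{o,o}+sN^{2}(s')^{T}+sN(s')^{T}$. To finish I would use the row-sum identity $N(s')^{T}=\mathbf{1}$: since $U$ is stochastic, the $x$-row sum of $Q$ is $\sum_{y\ne o}U_{x,y}=1-U_{x,o}$, i.e.\ $(I-Q)\mathbf{1}=(s')^{T}$, whence $sN(s')^{T}=s\mathbf{1}=\sum_{x\ne o}U_{o,x}=1-U_{o,o}$ and the two stray terms collapse. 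The equivalent factored form is immediate from $(sN)(s'N^{T})^{T}=sN\cdot N(s')^{T}=sN^{2}(s')^{T}$.

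I expect the crux to be the middle step: pinning down, without an off-by-one slip, exactly which power of $Q$ appears in $p_k$ (and hence which lower-order term the reindexed series contributes), because the additive constant in the final formula is entirely governed by that bookkeeping. (Indeed the clean derivation above lands on the constant $1=U_{o,o}+(1-U_{o,o})$ rather than $U_{o,o}$ alone, the discrepancy being exactly the $sN(s')^{T}$ term; I would re-examine the waiting-room time indexing carefully here and sanity-check against a small example, such as the triangle graph, where the return time to $o$ must equal $2|E|/\deg(o)=3$.) Everything else reduces to a direct appeal to the preceding propositions or to the elementary stochasticity identity.
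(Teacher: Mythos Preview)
Your approach is exactly the paper's: extract $p_k$ from the $l$-column of $M^{k-1}$ via the block formula, then sum using $S_1=(I-Q)^{-1}$ and $S_2=(I-Q)^{-2}$. Your bookkeeping is in fact tighter than the paper's, and the discrepancy you flag is real: the theorem as stated is off by $1-U_{o,o}$.

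The paper's proof asserts $E=U_{o,o}+\sum_{k\ge 1}k\,(sM^{k})_{l}$, but as you correctly track, $(sM^{k})_{l}=p_{k+1}$, so that sum equals $\sum_{k\ge 2}(k-1)p_{k}$, not $\sum_{k\ge 2}kp_{k}$; the missing piece is precisely $\sum_{k\ge 2}p_{k}=1-U_{o,o}=sN(s')^{T}$, the extra term your derivation produces. Your formula $E=1+sN^{2}(s')^{T}$ is the correct one. Your triangle sanity check settles it: there $Q=\tfrac12\bigl(\begin{smallmatrix}0&1\\1&0\end{smallmatrix}\bigr)$, $N=\tfrac13\bigl(\begin{smallmatrix}4&2\\2&4\end{smallmatrix}\bigr)$, $s=s'=(\tfrac12,\tfrac12)$, giving $sN^{2}(s')^{T}=2$, so the paper's formula returns $2$ while yours returns the required $3$. (The paper's own grid application in the next section quietly gets the right answer $n_{1}\cdots n_{d}$ only because the dropped $+1$ is reabsorbed when ``the number of states'' is read as $|V|$ rather than $|V|-1$.) So: keep your argument as written, and state the corrected constant.
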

    \begin{proof}
      Since one step has already occurred from the origin, the return time is $1\cdot P_0=U_{o,o}$ plus the expected number of additional steps until absorption in the left waiting room state \( l \). Let \( s^k \in \mathbb{R}^{1 \times (n+1)} \) denote the distribution after \( k \) steps in the waiting room Markov chain, starting from \( s^0 \). Then:
      
      \begin{align*}
        E &= U_{o,o} + \sum_{k=1}^\infty k \cdot (s^k)_l \\
        &= U_{o,o} + \sum_{k=1}^\infty k \cdot (s M^k)_{l}\\
        &= U_{o,o} + \sum_{k=1}^\infty k \sum_{a \in A} U_{o,a} (M^k)_{a,l}
      \end{align*}
      
      By the block structure of \( M^k \), and noting that \(T\) kills contributions to the first column, we have
      
      \[
      (M^k)_{a,l} = (Q^{k-1} R)_{a,0},
      \]
      
      Hence,

      \begin{align*}
        E &= U_{o,o} + \sum_{k=1}^\infty k \sum_{a,a' \in A} U_{o,a} (Q^{k-1})_{a,a'}U_{a',o} \\
        & = U_{o,o} + s N^2 (s')^T
      \end{align*}
      
    \end{proof}

    This result is useful both analytically and numerically. As an example, we will compute the first return time for some cases on rectangular bounded grids.
      
    \section{Over Bounded Grids}

    We consider a random walk on a bounded rectangular grid of dimension $d$, with side lengths $n_1, \dots, n_d$. That is, the state space is the discrete box
    
    $$
    V = \{0,1,\dots,n_1-1\} \times \cdots \times \{0,1,\dots,n_d-1\} \subset \mathbb{Z}^d.
    $$
    
    At each step, the walker selects one of the directions uniformly at random and attempts to move one unit in that direction. We investigate several standard boundary conditions:

    \begin{enumerate}
      \item Periodic (torus): The grid wraps around; stepping off one side places the walker on the opposite side.
      \item Stay-still (absorbing wall): If a move would take the walker outside the grid, it is canceled — the walker stays in place.
      \item Reflecting: If a move would cross the boundary, the walker instead moves one unit in the opposite direction (bounces back).
    \end{enumerate}

\begin{proposition}
  For case (1), for any point on the grid, the expected return time is $n_1n_2\cdots n_d$. For (2), for internal points, it is $n_1n_2\cdots n_d$. For points on the boundary, it is $1/4+n_1n_2\cdots n_d$, while on the cornor it is $1/2 + n_1n_2\cdots n_d$. For (3), the expected return time, for internal points, is $(n_1-1)(n_2-1)\cdots(n_d-1)$; for edge points, is $2(n_1-1)(n_2-1)\cdots(n_d-1)$; for cornor points, it is $4(n_1-1)(n_2-1)\cdots(n_d-1)$.
\end{proposition}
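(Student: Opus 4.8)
The plan is to apply the Theorem above, which writes the expected first return time as $U_{o,o}$ plus the quantity $sN^{2}(s')^{T}$ built from the fundamental matrix $N=(I-Q)^{-1}$ of the walk killed at $o$. I would not invert $I-Q$ by hand. Since every row of $U$ sums to $1$, one has $(I-Q)\mathbf{1}=(s')^{T}$, hence $N(s')^{T}=\mathbf{1}$, and therefore $sN^{2}(s')^{T}=sN\mathbf{1}=\sum_{v\neq o}U_{o,v}\,h_{v}$, where $h_{v}$ is the expected number of steps to hit $o$ starting from $v$. For a random walk on a finite connected graph this sum is governed by the stationary distribution; concretely the expected first return time is essentially $1/\pi(o)$. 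So the whole computation reduces, in each case, to two ingredients: the value $\pi(o)$, and the self-loop probability $U_{o,o}$ --- the latter being exactly what produces the additive corrections $+\tfrac14$ and $+\tfrac12$ in case (2).

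The key observation for computing $\pi$ is that the grid walk is the average $P=\tfrac1d\sum_{i=1}^{d}P_{i}$, where $P_{i}$ resamples only coordinate $i$ according to the prescribed one-dimensional walk on $\{0,\dots,n_{i}-1\}$ and freezes the others. Consequently, if $\pi_{i}$ is stationary --- in fact reversible --- for the $i$-th one-dimensional walk, then the product measure $\pi=\bigotimes_{i}\pi_{i}$ is stationary and reversible for every $P_{i}$, and hence for $P$. I would then dispatch the one-dimensional problems separately. On the cycle $\mathbb{Z}/n_{i}\mathbb{Z}$ (periodic case) $\pi_{i}$ is uniform. For the one-dimensional stay-still walk $\pi_{i}$ is again uniform: adjacent sites exchange mass at equal rates, and the endpoint self-loops leave detailed balance intact. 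For the one-dimensional reflecting walk $\pi_{i}$ is uniform on interior sites with half that mass at each of the two endpoints; the detailed-balance check there uses that a reflected step off an endpoint deposits the walker on the adjacent interior site, so the endpoint-to-interior transition probability is effectively doubled. Taking products and normalizing yields $\pi(o)=1/\prod_{i}n_{i}$ in cases (1) and (2), and $\pi(o)=2^{-m}/\prod_{i}(n_{i}-1)$ in case (3), where $m$ is the number of coordinates of $o$ equal to $0$ or $n_{i}-1$.

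It then remains to combine $\pi(o)$ with $U_{o,o}$. In the periodic case there are no self-loops, so $E=1/\pi(o)=\prod_{i}n_{i}$ for every vertex. In the stay-still case $U_{o,o}=m/(2d)$, since for each boundary coordinate exactly one of the two attempted moves is cancelled, each with probability $1/(2d)$; hence $E=U_{o,o}+\prod_{i}n_{i}$, which is $\prod_{i}n_{i}$ at interior points, $\tfrac14+\prod_{i}n_{i}$ on an edge (the case $m=1$, $d=2$), and $\tfrac12+\prod_{i}n_{i}$ at a corner (the case $m=d$, equal to $2$ in the plane). In the reflecting case $U_{o,o}=0$, since every reflected move lands on a neighbour, so $E=1/\pi(o)=2^{m}\prod_{i}(n_{i}-1)$, giving $\prod_{i}(n_{i}-1)$, $2\prod_{i}(n_{i}-1)$ and $4\prod_{i}(n_{i}-1)$ for $m=0,1,2$.

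I expect the one-dimensional reflecting walk to be the main obstacle: correctly placing the stationary weights at the two endpoints and verifying detailed balance there, since reflection breaks the symmetry of the transition matrix in a way that is easy to mishandle, and this is precisely the step responsible for the factors $2$ and $4$. A second delicate point is the bookkeeping of the forced first step and of the $U_{o,o}$ term inside the Theorem --- this is where the small additive constants $\tfrac14,\tfrac12$ of case (2) originate, and it should be tracked carefully rather than read off. Finally, I would state explicitly that the constants $\tfrac14,\tfrac12,2,4$ are the $d=2$ specializations of the general formulas $E=U_{o,o}+\prod_{i}n_{i}$ (stay-still) and $E=2^{m}\prod_{i}(n_{i}-1)$ (reflecting), $m$ being the number of boundary coordinates of $o$.
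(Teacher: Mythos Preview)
Your route is genuinely different from the paper's. The paper argues purely linear-algebraically: for cases (1) and (2) it uses the symmetry of $Q$ to write $s=\mathbf{1}(I-Q)$ and collapse $sN^{2}(s')^{T}$ to a sum of ones; for case (3) it solves $s=x(I-Q)$ by exhibiting an explicit weight vector $y$ (with values $\tfrac14,\tfrac12,1$ at corner, edge, interior points) and then rescales by $1,2,4$ according to the location of $o$. You instead invoke the stationary distribution and Kac's lemma, computing $\pi$ as a product of one-dimensional stationary measures. The two are equivalent in substance --- the paper's solution $x$ of $s=x(I-Q)$ is precisely $\pi|_{V\setminus\{o\}}/\pi(o)$, and its $y$ is the unnormalised reversible measure --- but your formulation is more transparent and makes the product structure do the work in arbitrary dimension.

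There is, however, a genuine gap in how you reconcile the Theorem's formula with Kac's lemma. You correctly obtain $N(s')^{T}=\mathbf{1}^{T}$ and hence $sN^{2}(s')^{T}=sN\mathbf{1}^{T}=\sum_{v\neq o}U_{o,v}\,h_{v}$; but first-step analysis gives $E=1+\sum_{v\neq o}U_{o,v}\,h_{v}$, so in fact $sN\mathbf{1}^{T}=1/\pi(o)-1$, not $1/\pi(o)$. You then assert both that the return time is ``essentially $1/\pi(o)$'' and that $E=U_{o,o}+\prod_{i}n_{i}$ in case (2), and these cannot both hold: Kac's lemma gives $E=1/\pi(o)$ \emph{exactly}, self-loops included. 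Since the stay-still matrix $U$ is symmetric, $\pi$ is uniform and $E=\prod_{i}n_{i}$ at \emph{every} vertex, with no additive $\tfrac14$ or $\tfrac12$; a two-state check ($U=\tfrac12 J$ on $\{0,1\}$, where $E=2$) confirms this. Your own stationary-distribution computation, followed through honestly, forces this conclusion; the corrections you are trying to reproduce trace back to the $U_{o,o}$ in the Theorem standing where the $1$ from the forced first step should be. Cases (1) and (3) come out right in your write-up because there $U_{o,o}=0$ and you effectively apply Kac directly, but the inconsistency in case (2) needs to be resolved rather than matched to the stated formula.
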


\begin{proof}
  For case (1) and (2), since $Q$ is symmetric, we know that $s=s'$, $N$ is symmetric, and $s=\mathbf{1}(I-Q)$, where \( \mathbf{1} \in \mathbb{R}^N \) is the row vector of ones. Therefore, the second term of $E$ simplifies to the number of states.

  For case (3), $Q$ is not symmetric anymore. we still have $s'=\mathbf{1}(I-Q)^T$. However, we need to solve $s=x(I-Q)$. Define $y$ by
  $$
  y_w=\begin{cases}
    1/4, &\text{if $w$ is on the corner}\\
    1/2, &\text{if $w$ is on the edge}\\
    1, &\text{if $w$ is internal}
  \end{cases}
  $$
  We observe that when $o$ is internal, $x=y$. When $o$ is on the edge, $o=2y$. When $o$ is on the corner, $o=4y$. Taking the sum over the components gives us the result.
\end{proof}

\end{document}